\def\RR{{\mathbb R}}
\def\PP{{\mbox{\bf P}}}
\def\cZ{{\cal Z}}
\newtheorem{theorem}{Theorem}
\newtheorem{lemma}[theorem]{Lemma}
\newtheorem{proposition}[theorem]{Proposition}
\newtheorem{remark}[theorem]{Remark}
\def\id{\mathrm{Id}}
\newcommand{\I}[1]{{\mathbb #1}}
\newcommand{\IG}{{\mathbb G}}
\newcommand{\dd}{\hspace{3pt}\mathrm{d}}
\newcommand{\AND}{\,\&\,}
\renewcommand{\mid}{:}
\begin{document}

\title{Quasirandom permutations are characterized\\ by 4-point densities}
\author{Daniel Kr\'al'\thanks{Mathematics Institute, DIMAP and Department of
Computer Science, University of Warwick, Coventry CV4 7AL, United Kingdom.
E-mail: {\tt
D.Kral@warwick.ac.uk}. Previous affiliation: Computer Science Institute, Faculty
of Mathematics and Physics, Charles University, Prague, Czech Republic. Part of
the work leading to this invention has received funding from the European
Research Council under the European Union's Seventh Framework Programme
(FP7/2007-2013)/ERC grant agreement no.~259385.}
\and
        Oleg Pikhurko\thanks{Mathematics Institute and DIMAP, University of
Warwick, Coventry, CV4 7AL, United
Kingdom. E-mail: \texttt{O.Pikhurko@warwick.ac.uk}. This author was supported by  the
European Research Council (grant agreement no.~306493)
and the National Science Foundation of the USA (grant DMS-1100215)}}
\date{}\maketitle
\begin{abstract}
 For permutations $\pi$ and $\tau$ of lengths $|\pi|\le|\tau|$, let 
$t(\pi,\tau)$ be the probability that the restriction of $\tau$ to a random
$|\pi|$-point set is (order) isomorphic to $\pi$.
We show that every sequence $\{\tau_j\}$ of permutations such that
$|\tau_j|\to\infty$ and  $t(\pi,\tau_j)\to 1/4!$ for every $4$-point permutation
$\pi$ is \emph{quasirandom} (that is, $t(\pi,\tau_j)\to 1/|\pi|!$ for
every $\pi$).
This answers a question posed by Graham.
\end{abstract}

\section{Introduction}

Roughly speaking, a combinatorial object is called \emph{quasirandom}
if it has properties that a random object has asymptotically almost surely.
This notion has been defined for various structures such
as tournaments~\cite{bib-tour}, set systems~\cite{bib-sets}, subsets of $\I
Z/n\I
Z$~\cite{bib-Zn},
$k$-uniform hypergraphs~\cite{cg90,bib-gowers,bib-gowers07,ht89,KRS}, 
groups~\cite{bib-groups}, etc.

In particular, quasirandomness has been extensively studied for graphs.
Extending earlier results of R{\"odl}~\cite{bib-rodl86} and
Thomason~\cite{bib-thomason87}, 
Chung, Graham and Wilson~\cite{bib-chung89+} gave seven equivalent properties of graph
sequences such that
the sequence of random graphs $\{G_{n,1/2}\}$ possesses them with probability one.
These properties include densities of subgraphs,
values of eigenvalues of the adjacency matrix or
the typical size of the common neighborhood of two vertices. 
In particular, it follows from the results in~\cite{bib-chung89+}
that if the density of $4$-vertex subgraphs in a large graph
is asymptotically the same as in $G_{n,1/2}$, then this is
true for every fixed subgraph. Graham (see~\cite[Page 141]{bib-cooper04}) asked whether
a similar phenomenon also occurs in the case of permutations.

Let us state his question more precisely. Let $S_k$ consist
of permutations on $[k]:=\{1,\dots,k\}$. We view
each $\pi\in S_k$ as a bijection $\pi:[k]\to [k]$ and call $|\pi|:=k$ its \emph{length}. 
For $\pi\in S_k$ and $\tau\in S_m$ with $k\le m$,
let $t(\pi,\tau)$ be the probability that a random $k$-point subset $X$
of $[m]$ induces a permutation \emph{isomorphic} to $\pi$ (that is,
$\tau(x_i)\le \tau(x_j)$ iff $\pi(i)\le \pi(j)$ where $X$ consists
of $x_1<\dots<x_k$). A sequence $\{\tau_j\}$ of permutations 
has \emph{Property $\PP(k)$}
if $|\tau_j|\to\infty$ and $t(\pi,\tau_j)=1/k!+o(1)$ for every $\pi\in
S_k$. It is easy to see
that $\PP(k+1)$ implies $\PP(k)$.
Graham asked whether there exists an integer $m$ such that $\PP(m)$ implies
$\PP(k)$ for every $k$. Here we answer this question:

\begin{theorem}\label{th:main}
Property $\PP(4)$ implies Property $\PP(k)$ for every $k$.
\end{theorem}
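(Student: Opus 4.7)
I will work in the framework of permuton limits. A \emph{permuton} is a Borel probability measure on $[0,1]^2$ with Lebesgue marginals; every permutation $\tau$ gives rise to a canonical permuton $\mu_\tau$, pattern densities $t(\pi,\cdot)$ extend continuously to permutons, and the space of permutons is sequentially compact in the topology of pattern-density convergence. Theorem~\ref{th:main} therefore reduces to the assertion that \emph{the only permuton $\mu$ with $t(\pi,\mu)=1/24$ for every $\pi\in S_4$ is the uniform permuton $\lambda$, namely Lebesgue measure on $[0,1]^2$}. A trivial averaging argument (a uniform random $k$-subset of a uniform random $4$-subset of $[n]$ is itself uniform) already forces $t(\pi,\mu)=1/k!$ for all $\pi$ with $|\pi|\le 4$, so we may freely use that the $2$-point and $3$-point densities of $\mu$ are also uniform.

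The core step is then to construct a continuous nonnegative functional $\Phi\colon\cM\to\RR$ on the permuton space $\cM$ with $\Phi(\mu)=0$ iff $\mu=\lambda$ and expressible as a linear combination of pattern densities of length $\le 4$; under our hypothesis every such density equals its value at $\lambda$, so $\Phi(\mu)=\Phi(\lambda)=0$ and hence $\mu=\lambda$. A natural candidate is the $L^2$-defect of the associated copula,
\[
\Phi(\mu):=\int_0^1\!\!\int_0^1\bigl(\mu([0,x]\times[0,y])-xy\bigr)^2\,\dd x\,\dd y,
\]
which is clearly nonnegative and vanishes iff $\mu=\lambda$. Writing $\mu([0,x]\times[0,y])=\PP_{(X,Y)\sim\mu}[X\le x,Y\le y]$, expanding the square, and evaluating the $(x,y)$-integrals via identities like $\int_0^1\PP[X_1,\dots,X_k\le x]\,\dd x=\EE[1-\max(X_1,\dots,X_k)]$, converts $\Phi(\mu)$ into a combination of expectations over a few independent samples of $\mu$; any auxiliary uniform variables produced along the way can be substituted by fresh $\mu$-coordinates because $\mu$ has uniform marginals. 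Each resulting expectation is the probability of an order-type event on at most four labelled samples, and hence a rational linear combination of the pattern densities $t(\pi,\mu)$ with $|\pi|\le 4$.

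The main obstacle is a genuine degree mismatch: $\Phi$ is of degree two in the signed measure $\mu-\lambda$, whereas $4$-point pattern densities are of degree four, and some intermediate quantities (such as $\EE_\mu[X^2Y^2]$) naively seem to require five samples. Making the computation close therefore requires systematic use of the zero row- and column-sum property of $\mu-\lambda$ together with the established uniformity of the $2$- and $3$-point densities, so that the higher-degree and off-diagonal contributions cancel after symmetrising over the natural $S_4\times S_4$ relabelling action on four samples. Alternative routes that might bypass this bookkeeping include a flag-algebra sum-of-squares identity writing a nonnegative combination of products $t(\pi,\mu)\,t(\sigma,\mu)$ that reduces, modulo the assumed uniformities, to a positive multiple of $\Phi(\mu)$; and a spectral approach via a suitable Markov integral operator built from $\mu$, whose Hilbert--Schmidt distance from the rank-one projection onto constants vanishes iff $\mu=\lambda$ and can, in principle, be extracted from $4$-point pattern data.
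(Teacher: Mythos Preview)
Your reduction to permutons and the choice of the copula defect
\[
\Phi(\mu)=\int_{[0,1]^2}\bigl(F(x,y)-xy\bigr)^2\,d\lambda(x,y),\qquad F(x,y)=\mu([0,x]\times[0,y]),
\]
are exactly the objects the paper works with. The gap is in the step you flag yourself and then do not actually close. Expanding the square gives
\[
\Phi(\mu)=\int F(x,y)^2\,d\lambda-2\int F(x,y)\,xy\,d\lambda+\tfrac19,
\]
and while $\int F^2\,d\lambda$ is a genuine $4$-sample quantity (take $(x,y)=(X_1,Y_2)$ and expand $F^2$ via $V_3,V_4$), the cross term evaluates to
\[
\int F(x,y)\,xy\,d\lambda=\tfrac14\,\EE_\mu\bigl[(1-X^2)(1-Y^2)\bigr]=\tfrac1{12}+\tfrac14\,\EE_\mu[X^2Y^2].
\]
The moment $\EE_\mu[X^2Y^2]$ honestly needs five $\mu$-samples (one for $(X,Y)$, two each to linearise $X^2$ and $Y^2$), and neither the zero-marginal property of $\mu-\lambda$ nor the uniformity of the $2$- and $3$-point densities pins it down: those hypotheses control $\EE_\mu[p(X,Y)]$ only for polynomials whose linearisation fits into four samples. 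So the promised cancellation does not happen, and your alternative routes (flag-algebra SOS, spectral operator) are only named, not executed.

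What the paper supplies is precisely the missing idea. It never tries to evaluate $\Phi$ from $4$-point data. Instead it shows, purely from $4$-symmetry, that the three integrals
\[
\int F(X,Y)^2\,d\mu,\qquad \int F(X,Y)\,XY\,d\mu,\qquad \int F(x,y)^2\,d\lambda
\]
are all equal to $1/9$ (each is a $\le 4$-sample order event), and then runs two Cauchy--Schwarz inequalities. The first,
\[
\Bigl(\int F(X,Y)\,XY\,d\mu\Bigr)^2\le\Bigl(\int F(X,Y)^2\,d\mu\Bigr)\Bigl(\int X^2Y^2\,d\mu\Bigr),
\]
turns the uncomputable $5$-sample moment into the inequality $\EE_\mu[X^2Y^2]\ge 1/9$; feeding this back gives $\int F\,xy\,d\lambda\ge 1/9$, and a second Cauchy--Schwarz against $\int F^2\,d\lambda=1/9$ forces equality, whose equality case is $F(x,y)=xy$. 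In your notation this squeeze is exactly $\Phi(\mu)\le 0\le \Phi(\mu)$. The Cauchy--Schwarz step that bounds a $5$-sample quantity by $4$-sample data is the substantive content your proposal lacks.
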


It is trivial to see that $\PP(1)\not\Rightarrow \PP(2)$ and an
example 
that $\PP(2)\not\Rightarrow\PP(3)$ can be found
in~\cite{bib-cooper04}. An unpublished manuscript of Cooper and Petrarka~\cite{bib-cp}
shows that $\PP(3)\not\Rightarrow\PP(4)$ and mentions that Chung could also show this
(as early as 2001). 
Being unaware of~\cite{bib-cp}, we found yet another
example that $\PP(3)\not\Rightarrow\PP(4)$. Since it is quite different from the 
construction in~\cite{bib-cp}, we present it in Section~\ref{sect-counter}.

Since these notions deal with properties of sequences of permutations, 
we find it convenient to operate with an appropriately
defined ``limit object'', analogous to that for graphs
introduced by Lov\'asz and Szegedy~\cite{bib-lovasz+szegedy}.
Here we use the analytic aspects of
permutation limits that were studied by Hoppen et al.~\cite{bib-hkmrs1,bib-hkms2}
and we derive Theorem~\ref{th:main} from
its analytic analog 
(Theorem~\ref{thm-main}).

Let the (normalized) \emph{discrepancy} $d(\tau)$ of $\tau\in S_n$ be the
maximum over intervals $A,B\subseteq[n]$ of 
 $$
 \left| \frac{|A|\,|B|}{n^2} - \frac{|\tau(A)\cap B|}{n}\right|.
 $$ 
 Cooper~\cite{bib-cooper04} calls a permutation sequence $\{\tau_j\}$
\emph{quasirandom} if $|\tau_j|\to\infty$ and $d(\tau_j)\to 0$. He also
gives other equivalent properties (\cite[Theorem~3.1]{bib-cooper04})
and he discusses various applications of ``random-like'' permutations.
Using the results of  \cite{bib-hkmrs1,bib-hkms2}, it is not hard to relate
quasirandomness and Properties $\PP(k)$: 

\begin{proposition}\label{equiv} A sequence $\{\tau_j\}$ of permutations is
quasirandom if and only if it satisfies Property $\PP(k)$ for every $k$.
\end{proposition}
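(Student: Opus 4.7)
The plan is to lift everything to the permuton framework of Hoppen et al.~\cite{bib-hkmrs1,bib-hkms2}. A \emph{permuton} is a Borel probability measure on $[0,1]^2$ with uniform marginals; to each $\tau\in S_n$ one attaches a permuton $\mu_\tau$ (the suitably rescaled uniform measure on the graph of $\tau$), and the pattern density $t(\pi,\cdot)$ extends to permutons via independent sampling. I will use three facts from~\cite{bib-hkmrs1,bib-hkms2} as black boxes: (i) the space of permutons is compact in the topology of weak convergence; (ii) $t(\pi,\tau_j)\to t(\pi,\mu)$ for every $\pi$ if and only if $\mu_{\tau_j}\to\mu$ weakly; and (iii) a permuton is uniquely determined by the family of its pattern densities. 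Let $\mu_0$ denote the \emph{uniform permuton}, that is, Lebesgue measure on $[0,1]^2$; a routine sampling computation gives $t(\pi,\mu_0)=1/|\pi|!$ for every $\pi$.

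For the forward implication, I would first observe that $d(\tau_j)$ equals, up to an $O(1/|\tau_j|)$ rounding error, the supremum of $|\mu_{\tau_j}(R)-\mu_0(R)|$ over axis-aligned rectangles $R\subseteq[0,1]^2$. Assuming $\PP(k)$ holds for every $k$, one has $t(\pi,\tau_j)\to t(\pi,\mu_0)$ for every $\pi$, so (ii) together with (iii) forces $\mu_{\tau_j}\to\mu_0$ weakly. Since $\mu_0$ has a continuous distribution function, the two-dimensional P\'olya theorem upgrades pointwise CDF convergence to uniform CDF convergence, which in turn gives uniform convergence on rectangles and hence $d(\tau_j)\to 0$.

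For the converse, I would take any weak subsequential limit $\mu$ of $\{\mu_{\tau_j}\}$ provided by (i). A short Portmanteau argument, combined with the fact that any rectangle $R\subseteq[0,1]^2$ can be sandwiched between grid rectangles on the $\frac1{|\tau_j|}$-grid whose $\mu_{\tau_j}$- and $\mu_0$-measures agree up to $d(\tau_j)+o(1)\to 0$, forces $\mu(R)=\mu_0(R)$ for every rectangle $R$, and hence $\mu=\mu_0$. Since every subsequential limit is $\mu_0$, the full sequence $\{\mu_{\tau_j}\}$ converges weakly to $\mu_0$, and (ii) then yields $t(\pi,\tau_j)\to 1/|\pi|!$ for every $\pi$, i.e.\ $\PP(k)$ holds for every~$k$.

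The main technical point is neither implication in isolation but the careful bookkeeping required to translate the combinatorial discrepancy $d(\tau)$, defined over integer subintervals of $[|\tau|]$, to the analytic quantity $\sup_R|\mu_\tau(R)-\mu_0(R)|$ over rectangles in $[0,1]^2$. All the heavy lifting---compactness, moment-determinacy, and the density/weak-convergence equivalence---is already available in~\cite{bib-hkmrs1,bib-hkms2}, so once this translation is in place the proposition reduces to a short compactness exercise.
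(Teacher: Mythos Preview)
Your proposal is correct and follows essentially the same route as the paper: both translate $d(\tau_j)$ into the analytic discrepancy $\sup_R|\mu_{\tau_j}(R)-\lambda(R)|$ up to $o(1)$, use weak convergence $\mu_{\tau_j}\to\lambda$ together with a P\'olya-type upgrade (the paper phrases it via monotonicity and continuity of the CDFs on the compact square) for the forward implication, and a subsequential-limit argument combined with ``$d(\mu)=0\Rightarrow\mu=\lambda$'' for the converse. The only differences are cosmetic: the paper argues the converse by contradiction rather than by showing every subsequential limit is $\lambda$, and it cites \cite[Lemma~5.3]{bib-hkmrs1} as an alternative to the P\'olya step.
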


The proof of Proposition~\ref{equiv} can be found in Section~\ref{ProofOfProp}. 
Thus our Theorem~\ref{th:main} implies that $\PP(4)$ alone 
is equivalent to quasi\-randomness. 

Finally, let us remark that McKay, Morse and
Wilf~\cite[Page 121]{bib-mmw} also
defined
a notion of quasirandomness for permutations. Their definition, although
related, is different from that of Cooper as it deals with sequences of
\emph{sets} of permutations.

\section{Limits of permutations}\label{limits}

Here we define convergence of permutation sequences and show how
a convergent sequence can be associated with an analytic limit object.
We refer the reader to \cite{bib-hkmrs1,bib-hkms2} for more details.

Let $\cZ$ consist of probability measures $\mu$ on the Borel
$\sigma$-algebra of $[0,1]^2$ that have \emph{uniform marginals}, that is,
$\mu(A\times [0,1])=\mu([0,1]\times A)=\lambda(A)$
for every Borel set $A\subseteq [0,1]$, where $\lambda$ is the
Lebesgue measure on $[0,1]$. 

Fix some $\mu\in\cZ$. Let $V_i=(X_i,Y_i)$ for $i\in[k]$ 
be independent random variables with $V_i\sim \mu$ (that is, 
each $V_i$ has distribution $\mu$). We view an outcome
$(X_1,Y_1,\dots,X_k,Y_k)$ as an element of $[0,1]^{2k}$.
For permutations
$\pi,\tau\in S_k$,
let $A_{\pi,\tau}\subseteq [0,1]^{2k}$ correspond to the event
that
 $$
 X_i< X_j \mbox{ iff } \pi(i)<
\pi(j)\ \ \ \AND\ \ \ Y_i< Y_j \mbox{ iff } \tau(i)<
\tau(j).
 $$
 (For example, the first statement above is equivalent to
 $X_{\pi^{-1}(1)}<\dots<X_{\pi^{-1}(k)}$.)
 Since each of the vectors $(X_1,\dots,X_k)$ and $(Y_1,\dots,Y_k)$
is uniformly distributed over $[0,1]^k$, the probability of the
\emph{degenerate event}
 \begin{equation}\label{eq:degenerate}
 D_k:=\big\{X_i=X_j \mbox{ or } Y_i=Y_j\mbox{ for some
$i\not=j$}\big\}\subseteq [0,1]^{2k}
 \end{equation}
 is zero. Note that the sets $A_{\pi,\tau}$ for $\pi,\tau\in S_k$
partition $[0,1]^{2k}\setminus D_k$.
If we reorder the indices in an outcome $(V_1,\dots,V_k)\in
[0,1]^{2k}\setminus D_k$ so that
$X_1<\dots<X_k$, then the new relative order on $Y_1,\dots,Y_k\in [0,1]$ defines
a random permutation $\sigma(k,\mu)\in S_k$. In other words, if
we land in $A_{\pi,\tau}$, then we set $\sigma(k,\mu)=\tau\pi^{-1}$.
Let the
\emph{density $t(\pi,\mu)$} of $\pi\in S_k$ be the probability that
$\sigma(k,\mu)= \pi$. Equivalently,
 \begin{equation}\label{eq:density}
 t(\pi,\mu)=\sum_{\rho\in S_k} \mu^k({A_{\rho,\pi\rho}})=k!\,
\mu^k(A_{\tau,\pi\tau}), \quad \mbox{any $\tau\in S_k$},
 \end{equation}
 where the last equality uses the fact that $\mu^k({A_{\rho,\pi\rho}})$ does not
depend on $\rho\in S_k$ (because $V_1,\dots,V_k$ are independent and identically distributed). 

A sequence of permutations $\{\tau_j\}$ is {\em
convergent} if $|\tau_j|\to\infty$ and $\{t(\pi,\tau_j)\}$ converges
for every permutation $\pi$. This is the same definition
of convergence as the one in \cite{bib-hkmrs1,bib-hkms2} except we 
additionally require that $|\tau_j|\to\infty$; cf.\
\cite[Claim~2.4]{bib-hkmrs1}. 

It is easy to show that every sequence
of permutations whose lengths tend to infinity has a convergent subsequence; see
e.g.\
\cite[Lemma~2.11]{bib-hkms2}. Furthermore, for every
convergent sequence $\{\tau_j\}$ there is $\mu\in\cZ$
such that for every permutation $\pi$ we have
 \begin{equation}\label{eq:limit}
 \lim_{j\to\infty} t(\pi,\tau_j)=t(\pi,\mu).
 \end{equation}
 For the reader's convenience, we sketch the proof from \cite{bib-hkmrs1} that $\mu$
exists. For $\pi\in S_k$, let $\mu_{\pi}\in\cZ$ be obtained by
dividing the square $[0,1]^2$
into $k\times k$ equal squares and distributing the mass uniformly
on the squares with indices $(i,\pi(i))$, $i=1,\dots,k$. By Prokhorov's
theorem, $\{\mu_{\tau_j}\}$ has a subsequence that  weakly converges to some
measure $\mu$. We have $\mu\in\cZ$ as this set is closed in the weak topology.
Finally, $\mu$ satisfies (\ref{eq:limit}) because, for any fixed $\pi$, the
function $t(\pi,{-}):\cZ\to\RR$ is continuous in the weak topology
and $t(\pi,\tau_j)=t(\pi,\mu_{\tau_j})+O(1/|\tau_j|)$. 

We remark that Hoppen et al.~\cite{bib-hkmrs1,bib-hkms2} proposed a slightly different limit object:
the regular conditional distribution function of $Y$ with respect to $X$, where
$(X,Y)\sim \mu$. 
Lemma~2.2 and Definition~2.3 in \cite{bib-hkmrs1} show how to switch back and forth between
the two objects.

Now, we are ready to state the analytic version of Theorem~\ref{th:main}. Let us call $\mu\in\cZ$
\emph{$k$-symmetric} if $t(\pi,\mu)=1/k!$ for every $\pi\in S_k$.

\begin{theorem}
\label{thm-main}
Every $4$-symmetric $\mu\in\cZ$ is
the (uniform) Lebesgue measure on $[0,1]^2$. In particular, $\mu$
is $k$-symmetric for every $k$.
\end{theorem}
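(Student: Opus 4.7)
The plan is to show that a $4$-symmetric $\mu\in\cZ$ must coincide with the Lebesgue measure $\lambda\otimes\lambda$ on $[0,1]^2$; the ``in particular'' clause is then automatic, since for any $k$ the Lebesgue measure is $k$-symmetric by the invariance of $\lambda^{\otimes k}$ under the relabeling of coordinate points. Because the rectangles $[0,a]\times[0,b]$ form a $\pi$-system generating the Borel $\sigma$-algebra on $[0,1]^2$, it suffices to prove that $F(a,b):=\mu([0,a]\times[0,b])$ equals $ab$ for every $a,b\in[0,1]$; writing $D(a,b):=F(a,b)-ab$, the goal becomes $D\equiv 0$ (and $D$ already vanishes on the boundary of the square by the uniform-marginals hypothesis $\mu\in\cZ$).

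Using~(\ref{eq:density}), I would next express each of the $24$ equations $t(\pi,\mu)=1/24$, $\pi\in S_4$, as a polynomial equation in $F$. Specifically, $t(\pi,\mu)=24\,\mu^4(A_{\mathrm{id},\pi})$, and the event $A_{\mathrm{id},\pi}$ is cut out by $12$ strict coordinate inequalities; by ordered integration and inclusion--exclusion, $\mu^4(A_{\mathrm{id},\pi})$ can be rewritten as a signed sum of products of four $F$-values, and, by approximating $\mu$ weakly by absolutely continuous measures, as a quartic integral of a density $h=1+g$ where $g$ has vanishing marginals. Substituting $F=\mathrm{id}+D$ then yields, for each $\pi\in S_4$, an equation of the form $L_\pi(D)+Q_\pi(D)+R_\pi(D)=0$, where the linear, quadratic, and cubic-plus-quartic contributions in $D$ are collected separately.

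The crux of the argument is to combine these $24$ equations to extract a non-negative, ``variance''-type functional of $D$ that is forced to vanish. This is the permutation analogue of the step in the Chung--Graham--Wilson proof where the $C_4$-density controls a sum of squared spectral quantities: the target identity should take the form $\int D^2\, w\,dx\,dy=0$ for a strictly positive weight $w$, or equivalently $\sum_{m,n\ge 1}c_{mn}^2=0$ in an orthonormal expansion $g=\sum_{m,n\ge 1}c_{mn}\phi_m(x)\phi_n(y)$ (with $\phi_0\equiv 1$, which encodes the vanishing marginals). I would look for such a combination by exploiting the natural symmetries on $S_4$ generated by inversion $\pi\mapsto\pi^{-1}$, reversal $\pi\mapsto\pi^R$, and complementation $\pi\mapsto\pi^C$ — which partition $S_4$ into a few orbits — so that the linear, cubic, and quartic error contributions cancel, isolating a positive semidefinite quadratic form. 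The main obstacle is precisely this bookkeeping: $24$ scalar identities must be parlayed into the infinite-dimensional constraint $D\equiv 0$, and verifying that the resulting quadratic form is nondegenerate (i.e.\ controls every Fourier-like mode of $g$) is where the real substance of the proof lies. Once $D\equiv 0$ is established, $\mu$ and $\lambda\otimes\lambda$ agree on all rectangles and hence on all Borel sets, completing the proof.
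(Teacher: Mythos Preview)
Your outline is reasonable in spirit but leaves the decisive step as an aspiration rather than an argument: you say you would ``look for'' a linear combination of the $24$ density identities that collapses to $\int D^2\,w=0$, and you acknowledge that verifying nondegeneracy ``is where the real substance of the proof lies.'' That substance is never supplied. More importantly, there is a concrete obstacle to the route you sketch. The natural candidates for a variance functional, such as $\int_{[0,1]^2} D(x,y)^2\,\mathrm{d}x\,\mathrm{d}y$ or $\int D^2\,\mathrm{d}\mu$, expand into terms like $\int F(x,y)\,xy\,\mathrm{d}x\,\mathrm{d}y$ and $\int X^2Y^2\,\mathrm{d}\mu$; when you unwind these via Fubini into events on independent $\mu$-samples, they require \emph{five} points, not four. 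So no linear combination of the $4$-point identities alone will directly produce $\int D^2\,w$ with a positive weight, and your plan to cancel the linear/cubic/quartic pieces and isolate a coercive quadratic form in $D$ cannot succeed as stated.

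The paper sidesteps this by a nonlinear device. It isolates three specific integrals, each genuinely a $4$-point quantity and hence forced to equal $1/9$ by $4$-symmetry: $\int F(X,Y)^2\,\mathrm{d}\mu$, $\int F(X,Y)XY\,\mathrm{d}\mu$, and $\int F(x,y)^2\,\mathrm{d}\lambda$. It then chains two Cauchy--Schwarz inequalities through the \emph{undetermined} $5$-point quantity $\int X^2Y^2\,\mathrm{d}\mu$, obtaining a string of inequalities that begins and ends at $1/81$. Equality in the second Cauchy--Schwarz forces $F(x,y)$ to be a scalar multiple of $xy$ $\lambda$-a.e., and continuity of $F$ plus $F(1,1)=1$ gives $F(a,b)=ab$ everywhere. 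So the key idea you are missing is not a clever linear combination but an inequality argument that squeezes an unknown $5$-point moment between two $4$-point ones; the paper's own remark confirms that any purely polynomial identity forcing $\mu=\lambda$ would already involve $5$-point densities.
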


Let us show how Theorem~\ref{thm-main} implies Theorem~\ref{th:main}.
Suppose on the contrary that some $\{\tau_j\}$ satisfies $\PP(4)$
but not $\PP(k)$.
Fix $\pi\in S_k$ and a subsequence $\{\tau_j'\}$
such that $\lim_{j\to\infty} t(\pi,\tau_j')$ exists and
is not equal to $1/k!$.
Consider now a convergent subsequence $\{\tau_j''\}$ of 
$\{\tau_j'\}$
and
let $\mu\in\cZ$ be its limit. By (\ref{eq:limit}), $\mu$ is 4-symmetric and, 
by Theorem~\ref{thm-main}, $\mu$ is $m$-symmetric for every $m$. But then
$\lim_{j\to\infty} t(\pi,\tau_j'')=t(\pi,\mu)=1/k!$,
which is the desired contradiction.

\section{Proof of Theorem~\ref{thm-main}}
\label{sect-main}

In this section, let $\mu\in\cZ$ be
arbitrary with $t(\pi,\mu)=1/4!$ for every
$\pi\in S_4$. Let $\lambda\in\cZ$ denote the uniform measure
on $[0,1]^2$. Our objective is to show that $\mu=\lambda$. 

Let $V=(X,Y)\sim \mu$ and $v=(x,y)\sim\lambda$ be independent. For brevity, 
let us
abbreviate $\int_{[0,1]^2}$ to $\int$. Define a function $F:[0,1]^2\to
[0,1]$ by
 $$
 F(a,b):=\mu([0,a]\times [0,b])=\int_{V\le (a,b)} \dd V,
 $$
  where
$V\le (a,b)$ means that
$X\le a$ and $Y\le b$. Since $\mu$ has uniform
marginals,
the function $F$ is continuous. 

First, we show that the 4-symmetry of $\mu$ uniquely determines certain
integrals.

\begin{lemma}\label{lm-integrals}
 $$
 \int F(X,Y)^2 \dd V =
\int F(X,Y)XY \dd V = 
\int  F(x,y)^2\dd v=\frac19.
 $$
\end{lemma}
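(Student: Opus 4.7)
The plan is to interpret each integral as the probability of a domination event for a small collection of independent random points, and then to evaluate these probabilities using the 4-symmetry hypothesis. A key preparatory observation is that, because every measure in $\cZ$ has uniform marginals, a sample $v = (x,y) \sim \lambda$ may be coupled as $v = (X_3, Y_4)$ where $V_3 = (X_3, Y_3)$ and $V_4 = (X_4, Y_4)$ are independent draws from $\mu$: the first coordinate of one $\mu$-sample and the second coordinate of an independent $\mu$-sample are independent uniform on $[0,1]$. This lets me convert every integral into a probability about iid $\mu$-samples, to which 4-symmetry applies directly.

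Using that $F(a,b) = \Pr[V' \le (a,b)]$ for $V' \sim \mu$ independent of $(a,b)$, and $ab = \lambda([0,a]\times[0,b]) = \Pr[v \le (a,b)]$ for $v \sim \lambda$ independent, squaring or multiplying and taking expectation turns the three integrals into
$$
\Pr[V_2 \le V_1 \AND V_3 \le V_1], \quad \Pr[V_2 \le V_1 \AND v \le V_1], \quad \Pr[V_1 \le v \AND V_2 \le v],
$$
respectively, where the $V_i$ are iid $\mu$-samples and $v \sim \lambda$ is independent. Substituting $v = (X_3, Y_4)$ in the last two probabilities (with $V_3, V_4$ further independent $\mu$-samples), each probability becomes an event of the form ``a designated index has the largest $X$-value among a designated triple of indices, AND a designated index has the largest $Y$-value among a designated triple of indices'' for iid $V_1, \dots, V_k \sim \mu$ with $k \in \{3,4\}$.

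Since 4-symmetry implies 3-symmetry (because 3-point densities are weighted averages of 4-point densities), in both cases each cell $A_{\alpha,\beta}$ has $\mu^k$-measure $1/(k!)^2$, so each probability reduces to counting the compatible pairs $(\alpha,\beta) \in S_k \times S_k$. The $X$-constraint fixes the maximum among three particular labels and therefore holds for exactly a $1/3$-fraction of $\alpha \in S_k$ by symmetry among those three labels; similarly for the $Y$-constraint on $\beta$. Hence the number of compatible pairs is $(k!)^2/9$, and multiplying by $1/(k!)^2$ gives $1/9$ in every case.

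The main obstacle, such as it is, is identifying the $\lambda$-to-$\mu$ coupling $v = (X_3, Y_4)$ that reduces everything to iid $\mu$-samples; once that is in place, each integral collapses to the product of two independent $1/3$ probabilities, one for the $X$-rank pattern and one for the $Y$-rank pattern.
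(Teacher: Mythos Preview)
Your argument is correct and follows the same route as the paper: each integral is rewritten via Fubini as the $\mu^k$-probability of a domination event among iid $\mu$-samples (using the same coupling $v=(X_i,Y_j)$ for the $\lambda$-sample), and $k$-symmetry with $k\le 4$ gives each cell $A_{\alpha,\beta}$ measure $1/(k!)^2$. The only cosmetic difference is in the final evaluation of the second and third integrals: the paper computes them by substituting $\mu=\lambda$ (legitimate since the value is determined by $4$-symmetry), whereas you count the compatible $(\alpha,\beta)$ pairs directly via symmetry among the three involved labels.
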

 \begin{proof}
Let
$V_i=(X_i,Y_i)\sim\mu$, for $i=1,2,\dots$, be independent random variables
distributed according to $\mu$.  By Fubini's theorem, we have
 $$
 \int F(X,Y)^2 \dd V = \int \left(\int_{V_2\le
V_1} \dd V_2 \right)\left(\int_{V_3\le
V_1} \dd V_3 \right)  \dd V_1 = \int_A
\dd(V_1,V_2,V_3),
 $$
 where $A=\{(V_1,V_2,V_3)\mid V_2\le V_1\AND V_3\le V_1\}\subseteq [0,1]^6$.
Note that $$
 A\setminus D_3=\bigcup_{\pi,\tau\in S_3\atop \pi(1)=\tau(1)=3} A_{\pi,\tau},
 $$
 where $D_3$ is defined by (\ref{eq:degenerate}) and the union is
 over $\pi,\tau\in S_3$ such that $\pi(1)=\tau(1)=3$.
The $4$-symmetry of $\mu$ and~(\ref{eq:density}) imply that
$\mu^k(A_{\pi,\tau})=(1/k!)^2$
for every $k\le 4$ and $\pi,\tau\in S_k$. Since $\mu^3(D_3)=0$, we have
$\mu^3(A)=4\cdot (1/3!)^2=1/9$, as required.

Likewise,
 \begin{equation}\label{eq:int2}
  \int  F(X,Y)XY \dd V	
  =\int_{B} \dd(V_1,\dots,V_4),
   \end{equation}
 where $B\subseteq [0,1]^8$
corresponds to the event that $V_2\le V_1$, $X_3\le X_1$ and $Y_4\le Y_1$.
One can derive (\ref{eq:int2}) by replacing each factor by an integral
(for example, 
$X$ is replaced by $\int_{X_3\le X} \dd V_3$) and applying Fubini's theorem. 

The integral in the right-hand side of (\ref{eq:int2}) is equal to the $\mu^4$-measure of the
union of $A_{\pi,\tau}$ over some (explicit) set of pairs $\pi,\tau\in
S_4$.
The measure of this set is uniquely  determined 
by the 4-symmetry of $\mu$. Thus the
integral does
not change if we replace $\mu$ by any other 4-symmetric measure. Considering
the uniform measure $\lambda$, we obtain $\int x^2y^2 \dd v=1/9$, as required.

Next, observe that $(X_1,Y_2)$ is uniformly distributed in $[0,1]^2$ 
because $V_1$ and $V_2$ are independent and have uniform marginals. Again, the
value
of
 $$
\int  F(x,y)^2\dd v= \int_{[0,1]^4}  F(X_1,Y_2)^2
\dd(V_1,V_2)=\int_{V_3,V_4\le (X_1,Y_2)} \dd(V_1,\dots,V_4), 
 $$
 does not depend on the choice of $\mu$ and can be easily computed
by taking $\mu=\lambda$.\end{proof}

Since $X$ is uniformly distributed in $[0,1]$, we have $\int X^2\dd V=1/3$.
Also,
 $$ \int F(x,y)xy \dd v = 
\int_{v\ge V}xy\dd (v,V) =
\frac{1}{4}\int
(1-X^2-Y^2+X^2Y^2)\dd V.
 $$

We use the above identities and apply the Cauchy-Schwartz inequality twice
to get the following series of inequalities:
 \begin{eqnarray*}
 \frac{1}{81} &=&
  \left(\int F(X,Y)XY\dd V\right)^2\ \le\ 
\left(\int F(X,Y)^2\dd
V\right)\cdot\left(\int X^2Y^2\dd V\right)\\
  &=&
 \frac19\,\left(4\cdot\int  F(x,y)xy\dd v-\int
(1-X^2-Y^2)\dd V\right)\\
 &=&
\frac{1}{9}\left(4\cdot\int F(x,y)xy\dd v-\frac{1}{3}
\right)\\
 &\le&
 \frac{4}{9}\,\sqrt{\int F(x,y)^2\dd v}\cdot \sqrt{\int
x^2y^2\dd v}-\frac{1}{27}\ =\
  \frac{1}{81}.
  \end{eqnarray*}
Thus we have equality throughout.
However,
the last inequality is equality if and only if $ F(a,b)$ is equal to a
fixed multiple of $ab$ almost everywhere
with respect to the uniform measure $\lambda$. Since $F$ is continuous
and $F(1,1)=1$,
we conclude that $ F(a,b)=ab$ for all $(a,b)\in[0,1]^2$. Thus the
measures $\mu$ and $\lambda$ coincide on all rectangles $[0,a]\times [0,b]$. 
Since these rectangles generate
the Borel $\sigma$-algebra on $[0,1]^2$, we have that $\mu=\lambda$  by the
uniqueness
statement of the Carath\'eodory
Theorem. This proves Theorem~\ref{thm-main}.

\begin{remark}\rm Our proof gives other sufficient conditions for
$\mu=\lambda$. For example, it suffices to require that each of the
three integrals of Lemma~\ref{lm-integrals} is $1/9$. The proof of the
lemma shows that, if desired, these integrals can be expressed as
linear combinations of densities $t(\pi,\mu)$ for $\pi\in S_4$. 
The single identity
$(\int F(x,y)xy\dd v)^2= \frac19 \int F(x,y)^2\dd v$ is also
sufficient for proving that $\mu=\lambda$;
however, if written as a polynomial in terms of permutation densities
(by mimicking the
proof of Lemma~\ref{lm-integrals}), it involves $5$-point permutations.
Our method can give other sufficient conditions in this manner; the choice of
which one to use may depend on the available information about the sequence.
\end{remark} 

\begin{remark}\label{rm:unique}\rm 
Also, the argument of Lemma~\ref{lm-integrals} shows that,
for every polynomial $P(x,y)$ and $\mu\in\cZ$,
the value of $\int P(x,y)\dd \mu(x,y)$ can be expressed
as a linear combination of permutation densities. This 
observation combined with the
Stone-Weierstrass Theorem gives the \emph{uniqueness} of a permutation limit: if $\mu,\mu'\in\cZ$
have the same permutation densities, then $\mu=\mu'$ (cf.\
\cite[Theorem~1.7]{bib-hkmrs1}).\end{remark}

\section{$\PP(3)$ does not imply $\PP(4)$}
\label{sect-counter}

First,
we construct a $3$-symmetric measure $\mu\in\cZ$ which is not 4-symmetric. 
For $a\in [0,1]$, let $M(a)$ be the set of all the points $(x,y)\in
[0,1]^2$
such that
$x+y\in\{1-a/2,1+a/2,a/2,2-a/2\}$ or
$y-x\in\{-a/2,a/2,1-a/2,a/2-1\}$. See Figure~\ref{fig-Z}
for illustrations of this definition.
Define $\mu_a\in\cZ$ for $a\in [0,1]$ to be the permutation limit such
that
the mass is uniformly distributed on $M(a)$.
Because of the symmetries of $\mu_a$ (invariance under
the horizontal and vertical reflections), we have that $t(\pi,\mu_a)=1/6$ for every
$\pi\in S_3$
if and only if $t(\id_3,\mu_a)=1/6$, where $\id_3$ is the
identity 3-point permutation.

\begin{figure}
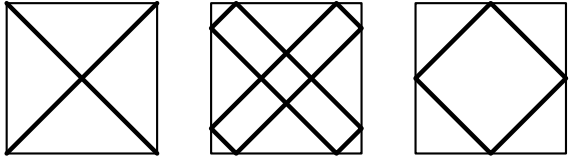

\begin{center}
\epsfbox{qrandperm.101}
\hskip 5mm
\epsfbox{qrandperm.102}
\hskip 5mm
\epsfbox{qrandperm.103}
\end{center}
\caption{The sets $M(0)$, $M(1/3)$ and $M(1)$.}
\label{fig-Z}
\end{figure}

Routine calculations show that $t(\id_3,\mu_0)=1/4$ and
$t(\id_3,\mu_1)=1/8$.
Since $t(\id_3,\mu_a)$ is continuous in $a$,
there exists $b\in [0,1]$ such that $t(\id_3,\mu_{b})=1/6$.
Moreover, $\mu_{b}$ is not 4-symmetric. This can be verified directly;
it also follows from Theorem~\ref{thm-main} since $\mu_{b}$ is not
the uniform measure.

Take a sequence $\{\tau_j\}$ of permutations that converges to
$\mu_{b}$. For example, the random sequence $\{\sigma(j,\mu_{b})\}$
has this property with probability one, see~\cite[Corollary
4.3]{bib-hkms2}. 
Any such sequence $\{\tau_j\}$ satisfies $\PP(3)$ but not $\PP(4)$.

\begin{remark}\label{rm:transform}\rm There are other ways how one can get an example
of a 3-symmetric non-uniform measure by transforming $M(0)$ into $M(1)$.
For example, for $0< a< 1$, let $\nu_a\in\cZ$  assign measure $a$ to $M(0)$
and measure $1-a$ to $M(1)$ with the conditional distributions being
equal to $\mu_0$ and $\mu_1$. Again by 
continuity, there is $a$ such that $\nu_a$ is 3-symmetric.\end{remark}

\begin{remark}\label{rm:inflatable}\rm Let us call a permutation
$\pi\in S_n$ \emph{$k$-inflatable} if $n>1$ and $\mu_\pi$ is $k$-symmetric,
where $\mu_\pi\in\cZ$ is the measure associated with $\pi$ 
as is described after~(\ref{eq:limit}). 
Cooper and Petrarka~\cite{bib-cp} discovered many $3$-inflatable permutations
by computer search,
thus giving examples that $\PP(3)\not\Rightarrow \PP(4)$. The results in~\cite{bib-cp}
show that a shortest
3-inflatable permutation has length $9$ and that $S_9$ has exactly four
$3$-inflatable permutations: $(4,3,8,9,5,1,2,7,6)$, $(4,7,2,9,5,1,8,3,6)$, and
their vertical reflections. Clearly, our Theorem~\ref{thm-main} implies that
no $4$-inflatable permutation can exist. In particular, this proves  (in a stronger form)
Conjecture~3 in~\cite{bib-cp} that no 4-inflatable permutation
with certain properties exists.\end{remark}

\section{Proof of Proposition~\ref{equiv}}\label{ProofOfProp}

Let $\{\tau_j\}$ be an arbitrary sequence of permutations with
$|\tau_j|\to\infty$. Let $\mu_j\in\cZ$ be the measure associated with
$\tau_j$ as is described after (\ref{eq:limit}). 
It is straightforward to verify that
$d(\tau_j)=d(\mu_j)+o(1)$, where 
 $$
 d(\mu):=\sup \big|\lambda(A\times B)-\mu(A\times B)\big|
 $$
 denotes the \emph{discrepancy} of $\mu\in\cZ$,
 with the supremum (in fact, it is maximum) being taken over intervals 
$A,B\subseteq [0,1]$. Also, it is not hard to show (cf.\ Remark~\ref{rm:unique})
that $\{\tau_j\}$ converges to $\mu$ if and only if $\{\mu_j\}$ weakly
converges to $\mu$.

First, suppose that $\{\tau_j\}$ satisfies $\PP(k)$ for each $k$. 
This means that $\{\tau_j\}$ converges to the uniform limit
$\lambda$. For $a,b\in[0,1]$, let $F_j(a,b):=\mu_j([0,a]\times [0,b])$ and $F(a,b):=ab$. Since
$d(\lambda)=0$ and 
$$
 \mu_j([a_1,a_2]\times[b_1,b_2])=F_j(a_2,
b_2)-F_j(a_1,b_2)-F_j(a_2,b_1)+F_j(a_1,b_1),
 $$
 we conclude that $d(\mu_j)\le 4\cdot \|F_j-F\|_\infty$.
The weak convergence $\mu_j\to\lambda$ of measures in $\cZ$ 
gives that $F_j\to
F$
pointwise. Since
$F$ and each function $F_j$, defined on the compact space $[0,1]^2$, are
continuous and monotone in both coordinates, this implies
that
 \begin{equation}\label{eq:Fn}
 \|F_j-F\|_\infty\to 0.
 \end{equation} 
 (Alternatively, (\ref{eq:Fn}) directly follows
from~\cite[Lemma~5.3]{bib-hkmrs1}.)  Thus $d(\mu_j)\to 0$ and $\{\tau_j\}$ is
quasirandom. 

Next suppose that $d(\tau_j)\to 0$. One way to establish
Property~$\PP(k)$ is to use
one of the equivalent definitions of quasirandomness
from~\cite[Theorem~3.1]{bib-cooper04}
(namely Property~[mS]). Alternatively, 
if $\PP(k)$ fails, then (by passing to a
subsequence) we can assume
that $\{\tau_j\}$ converges to some $\mu\in\cZ$ with $\mu\not=\lambda$.
However, we have that $d(\mu)=0$, which implies $\mu=\lambda$,
contradicting our assumption.  This finishes the proof of Proposition~\ref{equiv}.

\section{Concluding remarks}

The theory of flag algebras developed by Razborov~\cite{bib-razborov07}
can be applied to permutation limits: a permutation $\pi:A\to A$
is viewed as two binary relations, each giving a linear order on $A$. For example,
Lemma~\ref{lm-integrals} can be stated and proved within the
flag algebra framework. This view has been helpful for us
when developing our proof.

A graph can be associated with a permutation $\pi\in S_n$ as follows:
let $G(\pi)$ be the graph on $[n]$ with vertices $i<j$ adjacent if
$\pi(i)<\pi(j)$.
Fix $\mu\in\cZ$ and sample a random permutation $\sigma(n,\mu)$.
Define a function $W:[0,1]^4\to\{0,1\}$
by $W(x_1,y_1,x_2,y_2)=1$ if we have 
$(x_1,y_1)<(x_2,y_2)$ or $(x_1,y_1)>(x_2,y_2)$ componentwise and
let $W(x_1,y_1,x_2,y_2)=0$ otherwise.
In other words, $W$ is the indicator function of
the event that $\sigma(2,\mu)$ is the identity 2-point permutation. 
Clearly, $G(\sigma(n,\mu))$ can be generated by sampling independently points
$V_1,\dots,V_n\in [0,1]^2$, each with distribution $\mu$, and connecting those
$i,j\in [n]$
for which $W(V_i,V_j)=1$. The latter procedure corresponds to generating
a random sample $\IG(n,W)$, where $W:[0,1]^2\times [0,1]^2\to[0,1]$ is viewed as a graphon represented
on Borel subsets of $[0,1]^2$ with measure $\mu$,
see~\cite[Section~2.6]{bib-lovasz+szegedy} for details.

Lov\'asz and S\'os~\cite{bib-force1} and Lov\'asz and Szegedy~\cite{bib-force2}
presented various sufficient conditions for a graphon $W$ to be \emph{finitely
forcible} which, in the above notation, means that there is $m$
such that the distribution of $\IG(m,W)$ uniquely determines that of $\IG(k,W)$
for every $k$. As far as we can see, none of these conditions 
directly applies to the graphon associated with the uniform measure
$\lambda\in\cZ$.
Since we answered Graham's question on quasirandom permutations by other means,
we did not pursue this approach any further.

We also refer the reader to Hoppen et al.~\cite[Section~5.3]{bib-hkms1} who
discuss finite forcibility for permutation limits, being motivated by some
questions in
parameter testing.

\section*{Acknowledgments}

The authors thank Carlos Hoppen, Andr\'as M\'ath\'e, and the anonymous referee for helpful comments.

\end{document}